\documentclass[11pt]{amsart}

\advance\hoffset by -0.5 truecm

\usepackage{amssymb}
\usepackage{amscd}
\usepackage{verbatim}
\usepackage{epsfig}
\usepackage{euscript}
\usepackage[colorlinks=true]{hyperref}
\usepackage{cite}
\hypersetup{urlcolor=blue, citecolor=red}
\usepackage{color}
\usepackage{mathtools}
\mathtoolsset{showonlyrefs}
\usepackage{tikz-cd}

\newtheorem{theorem}{Theorem}%[section]
\newtheorem{lemma}{Lemma}
\newtheorem{corollary}{Corollary} 
\newtheorem{proposition} {Proposition}

\theoremstyle{definition}
\newtheorem{definition}{Definition}

\theoremstyle{remark}
\newtheorem{remark}{Remark}

\def \R{\mathbb R}
\def \p{\partial}

\def \<{\langle}
\def \>{\rangle}

\DeclareMathOperator{\tr}{tr}
\renewcommand{\d}{{d}}

\newcommand{\supp}{\operatorname{supp}}
\newcommand{\Ker}{\operatorname{Ker}}

\newcommand{\id}{\operatorname{id}}
\let \div \undefined
\DeclareMathOperator{\div}{div}

\renewcommand{\r}{\eqref}
\newcommand{\bo}{{\partial \Omega}}

\begin{document}

%\title[Minimal surface and Radon transform]{Geometric inverse problem for minimal surfaces and Radon transform of tensor fields}
\title[The linearized minimal surfaces problem]{The linearized minimal surfaces problem}

\author[Haim Grebnev]{Haim Grebnev}
\address{Department of Mathematics, Purdue University, West Lafayette, IN 47907, USA}
\email{hgrebnev@purdue.edu}

\author[Plamen Stefanov]{Plamen Stefanov}
\address{Department of Mathematics, Purdue University, West Lafayette, IN 47907, USA}
%\thanks{P.S. partly supported by  NSF  Grant DMS-2154489}
\email{stefanov@math.purdue.edu}

\thanks{P.S. partly supported by  NSF  Grants DMS-2154489 and DMS-2452757.}

\author[Gunther Uhlmann]{Gunther Uhlmann}
\address{Department of Mathematics, University of Washington, Seattle, WA 98195, USA}
\email{gunther@math.washington.edu}

\thanks{G.U. is partially supported by NSF}

\author[Hanming Zhou]{Hanming Zhou}
\address{Department of Mathematics, University of California Santa Barbara, Santa Barbara, CA 93106-3080, USA}
\email{hzhou@math.ucsb.edu}

\thanks{H.Z. is partly supported by NSF grant DMS-2408369 and Simons Foundation Travel Support for Mathematicians MPS-TSM-00008046.}

%\date{\today}
\date{}

\begin{abstract}
We characterize the kernel of the linearization $R$ of the minimal surface problem about the Euclidean metric in a bounded smooth domain $\Omega\subset\R^n$, $n\ge2$,  with the background minimal surfaces being the Euclidean planes.  We show that, in the whole-space Euclidean decomposition, the kernel consists of potential fields and TT fields. For bounded domains, a similar phenomenon
appears with additional boundary coupling conditions; in particular, the TT part may be coupled to a harmonic conformal component.
\end{abstract}
\maketitle

%--------------------------------------------------------------------------------

\section{Introduction}

The nonlinear minimal surface problem asks whether we can determine a Riemannian metric $g$ on a compact Riemannian manifold $M$ with boundary, up to an isometry fixing the boundary $\partial M$, from the knowledge of the areas of all minimal (hyper)surfaces $\Sigma$ with given ``loops'' $\Sigma\cap \partial M$. For the recent progress with this problem, and its relevance to physics, we refer to   \cite{AlexakisBN,busch2025_minimal}, and the references therein.

The purpose of this note is to analyze the kernel of the linearization $R$ of the minimal surface problem about the Euclidean metric in a bounded smooth domain $\Omega\subset\R^n$, $n\ge 2$,  with the background minimal surfaces chosen to be the Euclidean planes. This linearization was computed in  \cite{busch2025_minimal} to be a certain Radon transform of tensor fields of order two over the minimal surfaces chosen, see Definition~\ref{def1} in the appendix. In the Euclidean case under consideration, it reduces to 
\[
Rf(\Sigma):= \int_{\Sigma} \tr_{\Sigma}(f)\, \d \mu,
\]
where $\Sigma$ runs over all hyperplanes,  $\tr_{\Sigma}(f)$ is the trace of $f$ restricted to $\Sigma$,  and $\d \mu$ is the surface measure. 
Of course, even in $\mathbb R^n$ the shape of minimal hypersurfaces can be quite complicated, such as catenoids, or helicoids, for example. Here we only consider the trivial minimal hypersurfaces, namely the hyperplanes in $\mathbb R^n$. Notice that the (oriented) hyperplanes in $\mathbb R^n$ can be parameterized by $(s,\omega)\in \mathbb R\times \mathbb S^{n-1}$ as $\{x\in\mathbb R^n: x\cdot \omega=s\}$. 
We first study the Radon transform of tensor fields defined on the whole Euclidean space. We can write
\begin{equation}\label{R2}
Rf(s,\omega) = \int_{x\cdot \omega=s} (\tr f - f_{ij} \omega^i\omega^j) \, \d \mu, 
\end{equation}
where $\tr f=f_{ij}\delta^{ij}=\sum_{i=1}^n f_{ii}$ with $\delta^{ij}$ the Kronecker delta function. 
It is straightforward to see that potential fields $f=d^s v$ (here $d^s$ is the symmetric differentiation), for one-form $v$ vanishing on the boundary $\bo$, are in $\Ker R$ even in the general Riemannian case, see the appendix. This is expected since the nonlinear minimal surface problem has the diffeomorphism gauge invariance, and potential fields linearize it.
We show however, that $\Ker R$ contains TT tensor fields as well (divergence-free and trace-free) satisfying certain boundary conditions. This is unexpected since it was proven in \cite{AlexakisBN}, that the nonlinear problem has a unique solution up to the diffeomorphism gauge. Therefore, there is no  second gauge group of transformations preserving the data, which TT tensors would linearize. On the other hand, we study a formally determined problem, while in \cite{AlexakisBN}, the nonlinear problem is overdetermined by infinitely many dimensions. Perhaps we can view the TT subspace as an instability one. 

In a recent paper \cite{busch2025_minimal}, it was shown that $R$ is invertible when $f$ is kept in a fixed conformal class with a background analytic metric; which also leads to uniqueness and stability results for the nonlinear problem. In the conformal case, $R$ reduces to a generalized Radon transform of a function over minimal surfaces, which is elliptic, at least under the Bolker condition, see also \cite{ ZH_surfaces,  AlexakisBN, Beylkin}, where the surfaces do not need to be minimal. 

We present several characterizations of $\Ker R$. First, using  a Fourier transform based analysis in section~\ref{sec_F}, we show that once we decompose $f$, originally supported in $\Omega$,  into 
\[
f=d^s v+f^\mathsf{TT}+\lambda \bf{e},
\]
with the potential part $d^s v$, the TT part $f^\mathsf{TT}$ and the conformal part $\lambda \bf{e}$ \textit{in the whole $\R^n$}, then $\Ker R$ consists of the first two parts, which in general extend their supports to the whole $\R^n$. We turn our attention next to a characterization of $\Ker R$ without ``escaping'' to $\R^n$. In Lemma~\ref{lemma1}, we prove that $\Ker R$ coincides with the kernel of the simple differential operator 
\begin{equation}\label{P}
Pf := -\Delta \tr f+ \p^j \p^i f_{ij}
\end{equation}  
when we view $f$ extended as zero outside $\Omega$. The operator $P$ happens to be the linearization of the scalar curvature about the Euclidean metric corresponding to an infinitesimal perturbation $f$ (a covariant symmetric 2-tensor field) of the metric tensor. The extension as zero could generate delta type terms at $\bo$, and their vanishing gives us boundary conditions. We exploit them in Theorem~\ref{thm_M1} and Theorem~\ref{thm_M2} to formulate characterizations of $\Ker R$ in terms of the standard decompositions of tensor fields of order two in $\Omega$. Even though those theorems give necessary and sufficient conditions, they still leave unanswered the question whether one can simplify the description even further.

\section{Fourier transform based analysis in $\R^n$}\label{sec_F} 
Note first that $Rf$ is well defined on $L^1(\R^n)$ with values in $L^\infty(S^{n-1}_\omega ;\, L^1(\R_s))$. It is also well-defined on $L^2(\R^n)$ so that to $ |D_s|^{(n-1)/2} Rf\in L^2_e(\R\times S^{n-1};\,  ds\,d\omega)$, where the subscript $e$ stands for the subspace of even functions of $(s,\omega)$. That map is actually unitary after scaling by a proper constant.

We start with a  Fourier slice theorem.
%$$
%R_0(f)(s,\omega):=\int_{x\cdot \omega=s} f(x)\, \d\mu(x)
%$$
%with $\d\mu(x)$ the measure on the hyperplane.
%\begin{lemma}  [Fourier Slice theorem for $R$] \label{scalar Fourier slice}
%For any $f\in L^1(\mathbb R^n)$,
%$$
%\hat f (r\omega)=\int_{\mathbb R} e^{-isr} R_0(f)(s,\omega)\, \d s, \quad \forall r\in \mathbb R,\, w\in \mathbb S^{n-1}.
%$$
%\end{lemma}

\begin{lemma}[Fourier Slice theorem for $R$] For $f\in L^1(\R^n)\cup L^2(\R^n)$, 
\begin{equation}\label{tensor Fourier slice 2}
\int_{\mathbb R} e^{-isr} Rf(s,\omega)\, \d s = \hat f_{ij}(r\omega)(\delta^{ij}-\omega^i \omega^j), \quad \forall r\in \mathbb R,\, \forall \omega\in \mathbb S^{n-1}. 
\end{equation}
\end{lemma}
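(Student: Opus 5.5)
The identity should follow by reducing to the classical scalar Fourier slice theorem. The key observation is that in \eqref{R2} the integrand $\tr f - f_{ij}\omega^i\omega^j = f_{ij}(\delta^{ij}-\omega^i\omega^j)$ is a linear combination of the components $f_{ij}$ with coefficients depending only on $\omega$, constant along the hyperplane $\{x\cdot\omega=s\}$. Writing $R_0$ for the scalar Radon transform $R_0h(s,\omega)=\int_{x\cdot\omega=s}h\,\d\mu$, we therefore have
\[
Rf(s,\omega)=(\delta^{ij}-\omega^i\omega^j)\,R_0 f_{ij}(s,\omega),
\]
and it suffices to prove $\int_{\R} e^{-isr}R_0h(s,\omega)\,\d s=\hat h(r\omega)$ for scalar $h\in L^1\cup L^2$ and apply it componentwise, using linearity of the $s$-Fourier transform.

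For $h\in L^1(\R^n)$, I would use the coarea decomposition $x=s\omega+y$, $y\in\omega^\perp$, which gives $\int_{\R^n}\phi(x)\,\d x=\int_\R\int_{x\cdot\omega=s}\phi\,\d\mu\,\d s$ for integrable $\phi$. Applying this with $\phi(x)=e^{-ir\,x\cdot\omega}h(x)$, the exponential equals $e^{-irs}$ on the hyperplane and factors out. This yields $\hat h(r\omega)=\int e^{-isr}R_0h(s,\omega)\,\d s$. Fubini is justified since $|\phi|=|h|\in L^1$, and it also shows $R_0h(\cdot,\omega)\in L^1(\R)$ for every $\omega$, so both sides are defined pointwise for all $(r,\omega)$.

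The main obstacle is the $L^2$ case, where $R_0h(\cdot,\omega)$ need not be integrable, so the left-hand side must be interpreted appropriately. I would prove the identity first for $h\in L^1\cap L^2$ (or Schwartz) as above, and then extend by density. For $h\in L^1\cap L^2$, Plancherel in $r$ with weight $|r|^{n-1}$ and polar coordinates give
\[
\int_{S^{n-1}}\int_\R |r|^{n-1}\big|\mathcal F_s(R_0h)(r,\omega)\big|^2\,\d r\,\d\omega = 2\|\hat h\|_{L^2}^2=2(2\pi)^n\|h\|_{L^2}^2 .
\]
This is exactly the unitarity of $h\mapsto|D_s|^{(n-1)/2}R_0h$ mentioned before the lemma. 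Consequently $R_0$ extends continuously to $L^2$, with the $s$-Fourier transform of $R_0h$ understood as an element of $L^2(|r|^{n-1}\d r\,\d\omega)$. Both sides of the slice identity are continuous in $h$ in this topology, so the identity persists, holding for a.e.\ $(r,\omega)$, equivalently in the distributional sense in $s$. Applying this componentwise to $f_{ij}$ and contracting with the bounded smooth coefficients $\delta^{ij}-\omega^i\omega^j$ gives \eqref{tensor Fourier slice 2}.
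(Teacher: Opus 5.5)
Your proof is correct and follows the paper's argument: on each hyperplane $\{x\cdot\omega=s\}$ the factor $e^{-irs}$ equals $e^{-ir\omega\cdot x}$. Fubini over this foliation then collapses the iterated integral to $\int_{\R^n}e^{-ir\omega\cdot x}f_{ij}(x)(\delta^{ij}-\omega^i\omega^j)\,dx=\hat f_{ij}(r\omega)(\delta^{ij}-\omega^i\omega^j)$, and your componentwise reduction to $R_0$ is the same computation. The paper gives only this computation, which is rigorous for $L^1$. Your density/Plancherel step is a genuine addition covering the $L^2$ case, where the left-hand side is best read as an element of $L^2(|r|^{n-1}\,dr\,d\omega)$ rather than as a distribution in $s$ (since $\hat f(r\omega)$ need not be locally integrable near $r=0$).
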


\begin{proof} 
Write
\begin{align}
\int_{\mathbb R} e^{-irs} Rf(s,\omega)\, \d s &=  \int_{\R} \int_{x\cdot \omega=s}e^{-irs}  (\tr f - f_{ij} \omega^i\omega^j) \, \d \mu\, \d s\\
&=  \int_{\R} \int_{x\cdot \omega=s}e^{-i r\omega\cdot x}  (\tr f - f_{ij} \omega^i\omega^j) \, \d \mu\, \d s\\
&=  \int_{\R^n} e^{-i r\omega\cdot x}  (\tr f - f_{ij} \omega^i\omega^j) \, \d x.
\end{align}
\end{proof}
Recall that the symmetric differentiation of a one-form $v$ is given by
\[
(d^s v)_{ij}=\frac{1}{2}(\p_i v_j+\p_j v_i),
\]
so it's easy to check that
\[
(\widehat{d^s v})_{ij} (\xi)\Big(\delta^{ij}-\frac{\xi^i \xi^j}{|\xi|^2}\Big)=0.
\]
Therefore we can show directly by \eqref{tensor Fourier slice 2} that the potential tensors are in the kernel of $R$ (through the extension of a compactly supported potential tensor by zero to the whole space $\mathbb R^n$). Of course, this is already verified in the appendix even for the Riemannian case.

On the other hand, \eqref{tensor Fourier slice 2} also shows that there are additional elements in the kernel of $R$. Let $\div$ be the adjoint to $-\d^s$ under the $L^2$ inner product.  Symmetric tensors $f$ satisfying the equation $\div f=0$ are called divergence free (or solenoidal) tensors.  We denote by $\div$ the divergence of one-forms, as well. 
We have the following orthogonal decomposition of $L^2$ symmetric tensor fields on $\mathbb R^n$, see \cite{Sh-book}.

\begin{lemma}\label{decomposition on R^n 1}
For any symmetric 2-tensor $f\in L^2(\mathbb R^n)$ of compact support, there exists uniquely determined symmetric 2-tensor $f^{s}_{\mathbb R^n}\in L^2(\mathbb R^n)$ and 1-form $v_{\mathbb R^n}\in H^1_{\rm loc}(\mathbb R^n)$, smooth outside $\supp f$, such that 
\[
f=f^{s}_{\mathbb R^n}+\d^s v_{\mathbb R^n},\quad \div f^{s}_{\mathbb R^n}=0,\quad v_{\mathbb R^n}\to 0 \,\, \mbox{as} \,\, |x|\to \infty.
\]
Moreover, the following estimates hold for $|x|$ large enough
\[
|f^{s}_{\mathbb R^n}(x)|+|\d^s v_{\mathbb R^n}(x)|\leq C(1+|x|)^{-n},\quad |v_{\mathbb R^n}(x)|\leq C(1+|x|)^{1-n}.
\]
\end{lemma}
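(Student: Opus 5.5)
To get the decomposition I will solve for the potential $v$ directly; everything else will follow from that. The condition $\div f^s_{\R^n}=0$ with $f^s_{\R^n}=f-d^s v$ is equivalent to the elliptic system
\[
\div d^s v=\div f ,
\]
where $\div f$ is a compactly supported distribution in $H^{-1}$. Written out, $(\div d^s v)_i=\frac12(\Delta v_i+\p_i\div v)$, which is a constant coefficient elliptic operator (the Lamé-type operator). I will solve it by Fourier transform. Taking the trace of the divergence of the equation gives $\Delta \div v=\div\div f$. So $\div v=\Delta^{-1}\div\div f$, and then
\[
\Delta v_i=2(\div f)_i-\p_i\Delta^{-1}\div\div f .
\]
Equivalently, the symbol is
\[
\hat v(\xi)=\frac{2}{|\xi|^2}\Big(\mathrm{Id}-\frac{\xi\otimes\xi}{2|\xi|^2}\Big)\widehat{\div f}(\xi),
\]
which is $|\xi|^{-2}$ times a homogeneous degree-zero matrix applied to $i\xi\cdot\hat f$. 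Hence $\hat v$ is homogeneous of degree $-1$ times the entire function $\hat f$. It is locally integrable for $n\ge2$, so $v$ is a well-defined tempered distribution.

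Regularity. Since $\widehat{d^s v}=m(\xi)\hat f(\xi)$ with $m$ a bounded homogeneous degree-zero symbol (indeed an orthogonal projection in the symmetric-tensor fiber), Plancherel gives $d^s v\in L^2$, and therefore $f^s_{\R^n}\in L^2$. The orthogonality of $f^s$ and $d^s v$ in $L^2$ follows from the same symbol picture: $m(\xi)$ is the orthogonal projection onto $\{\xi\odot w\}$. The gradient $\nabla v$ has symbol $\xi\hat v$, again degree zero times $\hat f$, so $\nabla v\in L^2$ and $v\in H^1_{\rm loc}$. Uniqueness: if $d^s w=h$ with $\div h=0$ and $w\to0$, then $\div d^s w=0$, so $w$ is Lamé-harmonic and decaying. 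Each component then satisfies $\Delta^2 w_i=0$ after applying the operator twice, and Liouville forces $w=0$. Smoothness outside $\supp f$ follows from ellipticity, since $\div d^s v=0$ there.

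Decay. This is the step needing care. Outside a ball containing $\supp f$, $v$ equals a convolution of the fundamental solution $E$ of the Lamé operator with $\div f$, i.e.\ $v=-\nabla E * f$ (in matrix/tensor form). $E$ is homogeneous of degree $2-n$ (with a log for $n=2$, whose gradient is still homogeneous of degree $-1$). Hence $\nabla E$ is homogeneous of degree $1-n$ and smooth away from $0$. For $|x|$ large and $y\in\supp f$, $|\nabla E(x-y)|\le C|x|^{1-n}$, which yields $|v(x)|\le C\|f\|_{L^1}|x|^{1-n}$. This also gives the normalization $v\to0$: among the solutions, picking the one given by the convolution fixes the additive constants and affine Killing fields. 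Differentiating once more gives $|d^s v(x)|\le C|x|^{-n}$. Since $f^s=-d^s v$ outside $\supp f$, the same bound holds for $f^s$.

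The main obstacle is justifying that the Fourier-defined $v$ coincides with the convolution $-\nabla E*f$, so that the far-field decay and the $H^1_{\rm loc}$ statement refer to the same object. The case $n=2$ is the delicate one, where $E$ has logarithmic growth. I will handle it by noting that only $\nabla E$ enters, and it is homogeneous of degree $-1$ with Fourier transform equal to the degree $-1$ symbol above. For $f\in L^2$ with compact support, $f\in L^1$, so the convolution converges absolutely away from the support.
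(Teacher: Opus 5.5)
Your proof is correct. The paper gives no argument of its own for this lemma; it only refers to Sharafutdinov's book, so your proposal works as a self-contained proof. Your ingredients are:
\begin{itemize}
\item existence, the $L^2$ bounds and orthogonality, from the fact that $\widehat{d^s v}(\xi)$ is the orthogonal projection of $\hat f(\xi)$ onto $\{\xi\odot w\}$, whose orthogonal complement is $\{h: h\xi=0\}$;
\item uniqueness from the ellipticity of $\div d^s$ together with Liouville;
\item the far-field bounds from $v=(\nabla E)*f$, with $\nabla E$ homogeneous of degree $1-n$.
\end{itemize}

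A few minor corrections, none of which affects the argument.

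Since $\Delta$ has symbol $-|\xi|^2$, your formula for $\hat v$ is missing an overall minus sign. It should read
\[
\hat v=-\frac{2}{|\xi|^2}\Big(\mathrm{Id}-\frac{\xi\otimes\xi}{2|\xi|^2}\Big)\widehat{\div f}.
\]
Similarly, $E*\div f=(\nabla E)*f$ carries no minus sign when $\div d^s E=\delta\,\mathrm{Id}$.

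Your remark that the convolution choice fixes ``constants and affine Killing fields'' understates the polynomial kernel of $\div d^s$. For example, $w=(x_1,-x_2,0,\dots,0)$ satisfies $\div d^s w=0$ but $d^s w\ne0$. Your uniqueness proof, via $\Delta^2 w_i=0$ and $w\to0$, does not depend on that remark.

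The step you flag as the main obstacle is in fact routine. Since $\nabla E\in\mathcal S'$ and $f\in\mathcal E'$, you have $\widehat{(\nabla E)*f}=\widehat{\nabla E}\,\hat f$. For $n=2$, the distribution $\widehat{\nabla E}$ and your degree $-1$ symbol are both homogeneous distributions of degree $-1$ that agree away from the origin. Their difference is therefore a finite combination of $\p^\alpha\delta$, each of which is homogeneous of degree $-2-|\alpha|$, so the difference vanishes.
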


We already know that $R(\d^s v_{\mathbb R^n})=0$. We consider the $L^2$ solenoidal symmetric 2-tensors, i.e., those $f$ satisfying 
\[
(\div f)_i=\p^j f_{ij}=0,\, i=1,\dots, n.
\]
These equations imply  
\[
\xi^j \hat f_{ij}(\xi)=0,\, i=1,\dots,n.
\]
 Therefore
\begin{equation}   \label{FT}
\hat f_{ij}(\xi) \Big(\delta^{ij}-\frac{\xi^i \xi^j}{|\xi|^2}\Big)=\hat f_{ij}(\xi)\delta^{ij}={\tr(\hat f)}(\xi),
\end{equation}
and by \eqref{tensor Fourier slice 2} again, this implies that trace-free solenoidal 2-tensors are in the kernel of $R$ (when defined on the whole space $\mathbb R^n$). They are called transverse--tracefree (TT) tensors in relativity. 

\begin{proposition}\label{decomposition on R^n 2}
For any symmetric 2-tensor $f\in L^2(\mathbb R^n)$ of compact support, there exists uniquely determined symmetric 2-tensor $f^\mathsf{TT}_{\mathbb R^n}\in L^2(\mathbb R^n)$, 1-form $v_{\mathbb R^n}\in H^1_\textrm{\rm loc}(\mathbb R^n)$ and a function $\lambda_{\mathbb R^n} \in L^2(\mathbb R^n)$, smooth outside $\supp f$, such that 
\[
f=f^\mathsf{TT}_{\mathbb R^n}+\d^s v_{\mathbb R^n}+\lambda_{\mathbb R^n}{\bf e},\quad \div f^\mathsf{TT}_{\mathbb R^n}=0, \quad \tr (f^\mathsf{TT}_{\mathbb R^n})=0, 
\]
where ${\bf e}$ is the Euclidean metric, and for $|x|$ large enough, 
\begin{equation}\label{est}
|f^\mathsf{TT}_{\mathbb R^n}(x)|+|\d^s v_{\mathbb R^n}(x)|+|\lambda_{\mathbb R^n}(x)|\leq C(1+|x|)^{-n},\quad |v_{\mathbb R^n}(x)|\leq C(1+|x|)^{1-n}.
\end{equation} 
\end{proposition}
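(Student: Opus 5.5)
Start from Lemma~\ref{decomposition on R^n 1}: write $f=f^{s}_{\R^n}+\d^s v_0$ with $\div f^{s}_{\R^n}=0$. The solenoidal part still carries a trace. We remove it by a further splitting of the form $\lambda{\bf e}+\d^s w$, chosen so that what is left stays divergence free. Everything is done on the Fourier side, where $f^{s}_{\R^n}$ satisfies $\xi^j\hat f^{s}_{ij}=0$.

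The natural ansatz is $w=\nabla\phi$, so that $\d^s w=\nabla^2\phi$. We want
\[
f^{s}_{\R^n}=f^\mathsf{TT}+\lambda{\bf e}+\nabla^2\phi ,
\]
with the right-hand side divergence free and the first term trace free.
\begin{itemize}
\item The divergence condition reads $\nabla\lambda+\nabla\Delta\phi=0$, so we take $\lambda=-\Delta\phi$.
\item The trace condition reads $\tr f^{s}_{\R^n}=n\lambda+\Delta\phi=(1-n)\Delta\phi$.
\end{itemize}
Hence $\Delta\phi=-\tr f^{s}_{\R^n}/(n-1)$, which is admissible since $n\ge2$. On the Fourier side,
\[
\hat\lambda=\frac{\widehat{\tr f^{s}}}{n-1},\qquad \widehat{\nabla^2\phi}_{ij}=-\frac{\xi_i\xi_j}{|\xi|^2}\,\frac{\widehat{\tr f^{s}}}{n-1},
\]
and
\[
\hat f^\mathsf{TT}=\hat f^{s}-\frac{\widehat{\tr f^{s}}}{n-1}\Big(\delta_{ij}-\frac{\xi_i\xi_j}{|\xi|^2}\Big).
\]
One checks directly that $\hat f^\mathsf{TT}$ is annihilated by $\xi^j$ and has zero trace. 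Set $v_{\R^n}=v_0+\nabla\phi$. All multipliers are homogeneous of degree $0$ and bounded, so $f^\mathsf{TT},\lambda\in L^2$ and $\d^s v_{\R^n}\in L^2$.

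Uniqueness follows on the Fourier side as well. If $0=h+\d^s v+\lambda{\bf e}$ with $h$ a TT tensor, take the trace and apply $\xi^j$ to obtain
\[
\widehat{\tr\d^s v}+n\hat\lambda=0,\qquad \xi^j\widehat{(\d^s v)}_{ij}+\xi_i\hat\lambda=0 .
\]
Contracting the second relation with $\xi^i$ and combining with the first gives $\hat v=0$ and $\hat\lambda=0$ away from $\xi=0$. Hence $v$ is a polynomial, which vanishes by the decay at infinity, and then $\lambda=0$ and $h=0$.

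For smoothness outside $\supp f$ and the decay \eqref{est}, it is cleaner to express everything in $f$ directly rather than through $f^{s}$. We have $\widehat{\tr f^{s}}=\hat f_{ij}(\delta^{ij}-\xi^i\xi^j/|\xi|^2)$, so
\[
(n-1)\lambda=\tr f-\Delta^{-1}\p^i\p^j f_{ij},
\]
which equals $-\Delta^{-1}Pf$ with $P$ as in \eqref{P}. Thus $\lambda$ is $f$ acted on by a homogeneous degree-$0$ Calderón–Zygmund operator, whose kernel is $c\,\p^2|x|^{2-n}$ (logarithmic when $n=2$, but after two derivatives it is $O(|x|^{-n})$). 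Since $f$ is compactly supported, $\lambda$ is smooth outside $\supp f$ and satisfies $|\lambda|\le C|x|^{-n}$ for large $|x|$. Likewise $\nabla\phi$ is given by a kernel homogeneous of degree $1-n$ applied to $f$, yielding the $(1+|x|)^{1-n}$ bound for $v_{\R^n}$, combined with Lemma~\ref{decomposition on R^n 1}. The bound for $f^\mathsf{TT}$ follows from the identity $f^\mathsf{TT}=f-\d^s v-\lambda{\bf e}$.

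The main obstacle is the low-frequency behaviour. The symbols such as $\xi_i\xi_j/|\xi|^2$ are discontinuous at $0$, and the kernel of $\nabla\phi$ has degree $1-n$. So I must check that the convolution kernels are genuinely homogeneous of degrees $-n$ and $1-n$ (with no log terms) and that $v\to0$ at infinity. Integrating by parts against a compactly supported $f$ handles this, including in the $n=2$ case.
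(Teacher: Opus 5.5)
Your proof is correct, but it takes a different route from the one the paper relies on.

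The paper defers to \cite{SU-book}, which follows Sharafutdinov's argument behind Proposition~\ref{pr_M}. There the trace condition gives $\lambda=\frac1n(\tr f-\div v)$. Substituting this into $\div(f-\d^s v-\lambda{\bf e})=0$ leaves a single elliptic system for $v$ alone, $\frac12\Delta v+(\frac12-\frac1n)\nabla\div v=\div f-\frac1n\nabla\tr f$, which says that the divergence of the trace-free part of $\d^s v$ equals that of $f$. This system is solved with decay at infinity, and Lemma~\ref{decomposition on R^n 1} is never used.

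You instead take Lemma~\ref{decomposition on R^n 1} as input and remove the trace of $f^s_{\R^n}$ by a scalar Poisson problem. This makes every component an explicit Fourier multiplier of $f$, and by uniqueness both constructions yield the same components.

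\begin{itemize}
\item Your route gives the formula $(n-1)\lambda_{\R^n}=-\Delta^{-1}Pf$. Together with Lemma~\ref{lemma1}, this yields Proposition~\ref{pr_R^n} immediately. It also shows that $f^{\mathsf{TT}}_{\R^n}=0$ when $n=2$, since then every solenoidal symbol is a multiple of $\xi^\perp\otimes\xi^\perp/|\xi|^2$.
\item The paper's route uses one elliptic solve and does not depend on the potential--solenoidal lemma. More importantly, it transfers directly to bounded domains and Riemannian manifolds, where there is no Fourier transform and the condition $v|_{\p M}=0$ replaces decay at infinity.
\end{itemize}

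On the low-frequency point you flag: for $n=2$ the function $\hat\phi=\widehat{\tr f^s}/((n-1)|\xi|^2)$ is not locally integrable at $\xi=0$. Its numerator has a nonzero, direction-dependent limit there. So skip $\phi$ and define $w=\nabla\phi$ directly by
\[
\hat w_k=\frac{i\xi_k(|\xi|^2\delta^{ij}-\xi^i\xi^j)\hat f_{ij}}{(n-1)|\xi|^4}.
\]
This is homogeneous of degree $-1>-n$ and smooth off the origin. Its kernel is therefore a locally integrable function homogeneous of degree $1-n$, with no logarithm. That gives $w\in H^1_{\rm loc}$ and $|w(x)|\le C|x|^{1-n}$ for large $|x|$, exactly as you assert.
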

\begin{proof}
The proof is provided in \cite{SU-book}, and it follows the proof in a bounded domain with boundary given in \cite{Sh-2D}, see Proposition~\ref{pr_M}.
\end{proof}

Observe that \r{est} implies that $R$ is well-defined on each component. 
\begin{proposition}\label{pr_R^n}
	With $f$ as in Proposition~\ref{decomposition on R^n 2}, assume $Rf=0$. Then $\lambda_{\mathbb R^n}=0$, i.e.,
	\[
	f=f^\mathsf{TT}_{\mathbb R^n}+\d^s v_{\mathbb R^n},\quad \text{$\div f^\mathsf{TT}_{\mathbb R^n}=0$, \,$\tr  (f^\mathsf{TT}_{\mathbb R^n})=0$}, %\quad v_{\mathbb R^n}\to 0 \,\, \mbox{as} \,\, |x|\to \infty,
	\] 
with 	$f^\mathsf{TT}_{\mathbb R^n}$, $v_{\mathbb R^n}$ having the regularity in Proposition~\ref{decomposition on R^n 2}, satisfying  estimate \eqref{est}. On the other hand, for every such $f$, $f^\mathsf{TT}_{\mathbb R^n}$, $v_{\mathbb R^n}$, we have $Rf=0$. 
\end{proposition}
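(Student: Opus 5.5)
We use the decomposition of Proposition~\ref{decomposition on R^n 2} together with the Fourier slice theorem \eqref{tensor Fourier slice 2}. By \eqref{est}, each of the three components is in $L^2(\R^n)$. Hence $R$ applied to each is well defined, and \eqref{tensor Fourier slice 2} holds for each, at least in the $L^2$ sense. By linearity,
\[
\int_{\R} e^{-isr} Rf(s,\omega)\,\d s=\Big(\hat f^{\mathsf{TT}}_{ij}+(\widehat{d^s v})_{ij}+\hat\lambda\,\delta_{ij}\Big)(r\omega)\,(\delta^{ij}-\omega^i\omega^j).
\]

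We evaluate the three terms separately at $\xi=r\omega$, $r\neq 0$. For the potential term, $(\widehat{d^s v})_{ij}(\xi)=\tfrac{i}{2}(\xi_i\hat v_j+\xi_j\hat v_i)$, and contracting with $\delta^{ij}-\xi^i\xi^j/|\xi|^2$ gives $0$. Here $\hat v$ is a tempered distribution since $v=O(|x|^{1-n})$. It is cleaner to work directly with $\widehat{d^s v}\in L^2$, using $d^s v=f-f^{\mathsf{TT}}-\lambda{\bf e}$ and the fact that its Fourier transform is of the form above away from $\xi=0$. For the TT term, $\xi^j\hat f^{\mathsf{TT}}_{ij}=0$ and $\tr\hat f^{\mathsf{TT}}=0$, so by \eqref{FT} the contraction vanishes. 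For the conformal term, $\delta_{ij}(\delta^{ij}-\omega^i\omega^j)=n-1$. Therefore
\[
\int_{\R} e^{-isr} Rf(s,\omega)\,\d s=(n-1)\hat\lambda(r\omega).
\]

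If $Rf=0$, then $\hat\lambda(r\omega)=0$ for a.e.\ $(r,\omega)$. Polar coordinates then give $\hat\lambda=0$ a.e.\ in $\R^n$, and Plancherel gives $\lambda=0$ since $\lambda\in L^2$ (here $n\ge2$, so $n-1\neq0$). The remaining decomposition, with its regularity and the estimate \eqref{est}, is inherited unchanged from Proposition~\ref{decomposition on R^n 2}. Conversely, if $f=f^{\mathsf{TT}}_{\R^n}+d^s v_{\R^n}$ with these properties, the same computation shows that the one-dimensional Fourier transform in $s$ of $Rf(\cdot,\omega)$ vanishes for a.e.\ $\omega$. Hence $Rf=0$ by injectivity of the Fourier transform in $s$.

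The main obstacle is justifying \eqref{tensor Fourier slice 2} for the individual components. Each of them is only $L^2$ with $O(|x|^{-n})$ decay, so not necessarily $L^1$, and $v$ itself need not be in $L^2$. I would handle this using the stated $L^2$ definition of $R$, which is unitary after applying $|D_s|^{(n-1)/2}$. The Fourier slice identity extends from $L^1\cap L^2$ to $L^2$ by density, as an identity in $L^2$ with the weight $|r|^{n-1}\,dr\,d\omega$. All the identities above then hold a.e.\ in $(r,\omega)$, which suffices.
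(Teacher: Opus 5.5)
Your proof is correct and follows the paper's argument. The TT and potential parts are annihilated via the Fourier slice identity and \eqref{FT}, which leaves $R(\lambda_{\R^n}{\bf e})=(n-1)R_0\lambda_{\R^n}$, and its vanishing forces $\lambda_{\R^n}=0$. You differ only in that you prove this last injectivity inline through the slice theorem and Plancherel, and you justify applying \eqref{tensor Fourier slice 2} to the non-compactly supported components. The paper leaves both points implicit. The $L^2$ route is the right one, since the $O(|x|^{-n})$ decay does not place $\lambda_{\R^n}$ or $f^{\mathsf{TT}}_{\R^n}$ in $L^1$.
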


\begin{proof}
The second statement follows from \eqref{FT}. For the first one, we get first $0=Rf = R (\lambda_{\mathbb R^n}{\bf e})$, which is just the Euclidean Radon transform of the function $(n-1)\lambda_{\mathbb R^n}$, therefore, $\lambda_{\mathbb R^n}=0$. 
\end{proof}

 The representation \eqref{tensor Fourier slice 2} indicates that $\Ker R$ might consist of the kernel of the differential operator $P$ introduced in the Introduction in \r{P}, 
with the right boundary conditions or conditions at infinity.  

We denote the standard Radon transform of scalar functions by $R_0$.

\begin{lemma} \label{lemma1}  
For every symmetric two-tensor field $f\in \mathcal{E}'(\R^n)$, 
\[
R_0Pf = -\p_s^2 Rf.
\]
In particular, such an $f$ is in $\Ker R$ if and only it is in $\Ker P$. 
\end{lemma}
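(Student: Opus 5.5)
We compare both sides on the Fourier side in $s$, using the Fourier slice theorem \eqref{tensor Fourier slice 2} for $R$ together with its scalar analogue for $R_0$:
\[
\int_\R e^{-isr}R_0 h(s,\omega)\,\d s=\hat h(r\omega).
\]
Since $f\in\mathcal E'(\R^n)$, both identities extend to compactly supported distributions. Here $\hat f$ is an entire function, $Rf$ and $R_0Pf$ are compactly supported distributions in $s$, and the integrals are interpreted as the Fourier transform in $s$ of distributions. We would justify this extension either by density of $C_0^\infty$ in $\mathcal E'$ with continuity of both sides, or by duality with the backprojection $R^*$. Alternatively, we first prove the identity for $f\in C_0^\infty$ and then extend it by continuity, since both sides are continuous on $\mathcal E'$.

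Next we compute the Fourier transform of $Pf$. Because $\widehat{\p_j u}=i\xi_j\hat u$,
\[
\widehat{Pf}(\xi)=|\xi|^2\,\tr\hat f(\xi)-\xi^i\xi^j\hat f_{ij}(\xi)=|\xi|^2\,\hat f_{ij}(\xi)\Big(\delta^{ij}-\frac{\xi^i\xi^j}{|\xi|^2}\Big).
\]
At $\xi=r\omega$ this equals $r^2\hat f_{ij}(r\omega)(\delta^{ij}-\omega^i\omega^j)$. By \eqref{tensor Fourier slice 2}, the last expression is $r^2$ times the $s$-Fourier transform of $Rf(\cdot,\omega)$. On the other hand, the $s$-Fourier transform of $-\p_s^2Rf$ is $-(ir)^2\widehat{Rf}=r^2\widehat{Rf}$. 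The scalar slice theorem therefore gives that $R_0Pf$ and $-\p_s^2Rf$ have the same $s$-Fourier transform for every $\omega$. By injectivity of the one-dimensional Fourier transform on distributions, $R_0Pf=-\p_s^2Rf$.

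For the equivalence, suppose first $Pf=0$. Then $\p_s^2 Rf=0$, so for each $\omega$ the distribution $Rf(\cdot,\omega)$ is affine in $s$. It is also compactly supported in $s$, because $f$ has compact support, so it vanishes and $Rf=0$. Conversely, if $Rf=0$, then $R_0Pf=0$, where $Pf\in\mathcal E'(\R^n)$. The Radon transform $R_0$ is injective on $\mathcal E'(\R^n)$, since $\widehat{Pf}(r\omega)=0$ for all $r,\omega$ forces $\widehat{Pf}=0$. Hence $Pf=0$.

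The main obstacle is only the rigorous meaning of $Rf$ for distributional $f$ and of the slice formula in that setting. We will handle it by defining $R$ on $\mathcal E'$ by duality and checking the identity on $C_0^\infty$ first. The rest is the symbol computation above.
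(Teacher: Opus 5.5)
Your proof is correct, and the equivalence $\Ker R=\Ker P$ on $\mathcal{E}'$ is argued exactly as in the paper. The identity $R_0Pf=-\partial_s^2Rf$ itself, however, is proved by a different route.

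The paper works in physical space. It fixes $\omega=e_n$, takes $f\in C_0^\infty$, and integrates $Pf$ over the hyperplane $x^n=s$. Every term containing a tangential derivative $\partial_1,\dots,\partial_{n-1}$ integrates to zero. Only $-\partial_n^2\tr f+\partial_n^2 f_{nn}$ survives, and this is $-\partial_s^2$ applied to the integrand $\tr f-f_{nn}$ of $Rf(s,e_n)$. The extension to $\mathcal{E}'$ is then done by density.

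You instead work on the Fourier side. The symbol computation shows that $\widehat{Pf}(r\omega)$ equals $r^2$ times the slice multiplier $\hat f_{ij}(r\omega)(\delta^{ij}-\omega^i\omega^j)$ in \eqref{tensor Fourier slice 2}, and $r^2$ is the symbol of $-\partial_s^2$.

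What your route buys:
\begin{itemize}
\item It makes transparent why $P$ is the right operator; this is precisely what the remark before the lemma hints at.
\item It handles distributions cleanly. Define $Rf(\cdot,\omega)$ for $f\in\mathcal{E}'$ as the pushforward of $f_{ij}(\delta^{ij}-\omega^i\omega^j)$ under $x\mapsto x\cdot\omega$. Then the slice formula follows at once by pairing $f$ with $e^{-irx\cdot\omega}$. This makes your statements for each fixed $\omega$ meaningful without the duality-plus-approximation detour you mention.
\end{itemize}

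What the paper's route buys: it is elementary and local to each hyperplane, and it uses neither translation invariance nor the Fourier transform. That is closer in spirit to what one would attempt for curved minimal surfaces.
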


\begin{proof}
We can assume $\omega=e_n$ without loss of generality.	Let $f\in C_0^\infty(\R^n)$ first. We have
\begin{align}
R_0 Pf(s,e_n)& = \int_{x^n=s}(-\Delta \tr f+ \p^i \p^j f_{ij})\, \d x'
=  \int_{x^n=s}(-\p_n^2 \tr f+ \p_n  \p^j f_{nj})\, \d x'\\
&=\int_{x^n=s}(-\p_n^2 \tr f+ \p_n^2 f_{nn})\, \d x' = -\p_n^2 Rf(s,e_n). 
\end{align}
We used  Green's formula once (integral of $\Delta' (\tr f)=\p_1^2 \tr f+\cdots+\p_{n-1}^2 \tr f$ on $x^n=s$ is zero), and the divergence theorem twice. 

To prove the first statement for $f\in \mathcal{E}'(\R^n)$, given such an $f$, we approximate it with $f_n\in C_0^\infty(\R^n)$ in $\mathcal{D}'(\R^n)$.

Now, let $f\in \Ker P\cap  \mathcal{E}'(\R^n)$. Then $\p_s^2Rf=0$, and since $Rf$ is compactly supported, we get $Rf=0$. Assume  $f\in \Ker R\cap  \mathcal{E}'(\R^n)$. Then $R_0Pf=0$ with $Pf\in  \mathcal{E}'(\R^n)$, which implies $Pf=0$.
\end{proof}

\section{\texorpdfstring{$R$}{R} restricted to a bounded domain} 
It is known that on a compact Riemannian manifold $(M,g)$ with boundary, there is a unique decomposition of $L^2$ symmetric 2-tensor $f$ \cite[Theorem 3.3]{Sh-2D}:
\begin{proposition}\label{pr_M} For every symmetric 2-tensor field $f\in H^k(M)$, $k=0,1,\dots$, 
\begin{equation} \label{dec_Omega} 
f=\d^s v+ \lambda g+ f^{\mathsf{TT}},\quad v|_{\p M}=0,\quad \div f^\mathsf{TT}=0,\quad \tr(f^\mathsf{TT})=0,
\end{equation}
for uniquely determined  one-form $v\in H^{k+1}$, a scalar function $\lambda\in H^k$, and a symmetric 2-tensor $f^\mathsf{TT}\in H^k$, depending continuously on $f\in H^k$. 
\end{proposition}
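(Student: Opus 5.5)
We establish Proposition~\ref{pr_M} by two successive elliptic boundary value problems: first split off the potential part with Dirichlet data, then split the remainder into a conformal part and a TT part. The boundary conditions come from the requirement that $v$ vanish on $\p M$, and the uniqueness comes from the orthogonality of the pieces.

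\emph{Step 1: the potential and solenoidal parts.} Given $f\in H^k(M)$, we look for $v$ with $v|_{\p M}=0$ solving
\[
\div \d^s v=\div f .
\]
The operator $\div\d^s$ is strongly elliptic. It is injective on $H^1_0$, since $\langle \div\d^s v,v\rangle=-\|\d^s v\|^2$, and $\d^s v=0$ with $v|_{\p M}=0$ forces $v=0$ (Killing fields vanishing on the boundary are zero). Hence the Dirichlet problem is uniquely solvable, and elliptic regularity gives $v\in H^{k+1}$ with continuous dependence. Then $h:=f-\d^s v$ satisfies $\div h=0$. This is the standard solenoidal decomposition.

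\emph{Step 2: the conformal and TT parts.} This does not reduce to splitting off $\frac1n(\tr h)g$, since that would destroy $\div=0$. Instead we look for a pair $(w,\lambda)$ with $w|_{\p M}=0$ such that
\[
f-\d^s w-\lambda g \ \text{is trace-free and divergence-free.}
\]
These conditions read
\[
\tr f-\div w-n\lambda=0,\qquad \div f-\div\d^s w-\d\lambda=0 .
\]
We eliminate $\lambda=\frac1n(\tr f-\div w)$ using the first equation. The second equation then becomes
\[
Lw:=\div\d^s w-\tfrac1n\,\d\div w=\div f-\tfrac1n\,\d\tr f,\qquad w|_{\p M}=0 .
\]
Here $L=\div\circ\mathcal{C}$, where $\mathcal C w:=\d^s w-\frac1n(\div w)g$ is the conformal Killing operator.

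We then check that $L$ is strongly elliptic for $n\ge2$. Its principal symbol is
\[
-\tfrac12\Bigl(|\xi|^2 w+(1-\tfrac2n)\,\xi(\xi\cdot w)\Bigr),
\]
which is negative definite. We also need injectivity with Dirichlet data. From $\langle Lw,w\rangle=-\|\mathcal Cw\|^2$, any kernel element is a conformal Killing field vanishing on $\p M$. Such a field vanishes identically, by unique continuation for the overdetermined conformal Killing system, or equivalently because a conformal Killing field is determined by finitely many jets at a point.

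Granting this, Fredholm theory with zero index (the operator is self-adjoint with Dirichlet conditions) gives unique solvability with $w\in H^{k+1}$. Since $\tr f-\div w\in H^k$, we also get $\lambda\in H^k$. We set $v=w$ and $f^{\mathsf{TT}}=f-\d^s w-\lambda g$. Step 1 is in fact subsumed by this step, and serves only as motivation.

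\emph{Step 3: uniqueness.} Suppose $0=\d^s v+\lambda g+f^{\mathsf{TT}}$. Pair this in $L^2$ with $\d^s v+\lambda g$. We have $\langle f^{\mathsf{TT}},\d^s v\rangle=-\langle\div f^{\mathsf{TT}},v\rangle=0$, using $v|_{\p M}=0$ in the integration by parts, and $\langle f^{\mathsf{TT}},\lambda g\rangle=\langle\tr f^{\mathsf{TT}},\lambda\rangle=0$. Hence $\d^s v+\lambda g=0$, so $v$ is a conformal Killing field vanishing on $\p M$. Therefore $v=0$, then $\lambda=0$, and finally $f^{\mathsf{TT}}=0$.

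\emph{Main obstacle.} The key point is the triviality of conformal Killing fields vanishing on $\p M$. For $n\ge3$ this follows from finite-dimensionality and the jet-determination property. For $n=2$ the conformal Killing fields are holomorphic vector fields, and the claim follows from the identity theorem on each component, since each component meets the boundary. The elliptic regularity for $k\ge1$ is standard (the Dirichlet condition satisfies the Lopatinskii condition for strongly elliptic systems), so this is the only delicate step.
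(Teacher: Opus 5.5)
Your proof is correct and follows the standard route for this decomposition. The paper gives no proof of its own; it only cites \cite[Theorem 3.3]{Sh-2D}, where the decomposition is likewise obtained from the Dirichlet problem for the divergence of the conformal Killing operator $\mathcal C w=\d^s w-\frac1n(\div w)g$, the triviality of conformal Killing fields vanishing on $\p M$, and $L^2$-orthogonality for uniqueness.

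One step is left loose. For $n\ge3$, the jet-determination argument needs the full $2$-jet of $w$ to vanish at some point, and you do not say why vanishing on $\p M$ gives that. It follows either from the conformal Killing equations along $\p M$, or by extending $w\in H^1_0$ by zero across $\p M$: the extension still satisfies $\mathcal C w=0$ weakly, is therefore smooth, and vanishes on an open set.
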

In local coordinates
\[
(\d^s v)_{ij}=\frac{1}{2}(\nabla_i v_j+\nabla_j v_i),
\]
where 
\[
\nabla_i v_j=\frac{\p v_j}{\p x^i}-\Gamma_{ij}^k v_k,
\]
$\Gamma_{ij}^k$ are the Christoffel symbols. 
$$(\div f)_i=g^{jk}\nabla_k f_{ij}, \quad i=1,\cdots, n,$$
and
$$\tr f=g^{ij}f_{ij},$$
where $g^{-1}=(g^{ij})$ represents the dual metric of $g$.

In particular $f^\mathsf{TT}$ is divergence-free and trace-free. In our case, $g$ is Euclidean, and $M$ is the closure of a bounded smooth domain $\Omega\subset \mathbb R^n$.

First, if $f=\d^s v$ is potential, $\d^s$ commutes with the extension as zero because $v=0$ on $\p\Omega$, and such fields are in $\Ker P$ (as well as in $\Ker R$). 

Let $\nu$ be the unit outer normal vector along the boundary $\p\Omega$. Given a symmetric $2$-tensor $f$, we denote 
$f_{\nu\nu}=f(\nu,\nu)$ and $f_{\nu T}$ is the tangential part (w.r.t. the boundary $\p\Omega$) of the $1$-form $f(\nu,\,\cdot\,)$. Next, $\div_{\p\Omega}$ is the tangential divergence (i.e. the divergence operator of the boundary $\p\Omega$).

\begin{theorem} \label{thm_M1}
The symmetric two-tensor field $f\in H^2(\Omega)$ 
belongs to $\Ker R$ if and only if its decomposition by Proposition~\ref{pr_M} satisfies the following: 
\[
\Delta\lambda=0 \quad \text{in $\Omega$},
\]
\[ f^\mathsf{TT}_{\nu\nu} = (n-1)\lambda, \quad \div_{\bo}  f_{\nu T}^\mathsf{TT} =(n-1)\partial_\nu\lambda\quad \text{on $\bo$.}
\]
\end{theorem}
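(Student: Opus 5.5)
Write $f^0$ for the extension of $f$ by zero to $\R^n$. By Lemma~\ref{lemma1}, $f\in\Ker R$ if and only if $Pf^0=0$ in $\mathcal D'(\R^n)$. The plan is therefore to compute the distribution $Pf^0$ explicitly. It splits into an interior part, which is $Pf$ computed in $\Omega$, and single- and double-layer terms supported on $\bo$. Each piece is then expressed through the decomposition of Proposition~\ref{pr_M}. Since $P$ is linear, we handle the three components $\d^s v$, $\lambda{\bf e}$ and $f^{\mathsf{TT}}$ separately.

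For the potential part, $v|_{\bo}=0$ gives $(\d^s v)^0=\d^s(v^0)$, as already observed. Moreover $P\d^s=0$ identically: in Fourier terms, the symbol of $P$ is $|\xi|^2\delta^{ij}-\xi^i\xi^j$, which annihilates $\xi_{(i}\hat v_{j)}$. Hence $P(\d^s v)^0=0$, and $Pf^0=P(\lambda{\bf e}+f^{\mathsf{TT}})^0$.

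Inside $\Omega$, $P(\lambda{\bf e})=-n\Delta\lambda+\Delta\lambda=-(n-1)\Delta\lambda$. For the TT part, $Pf^{\mathsf{TT}}=-\Delta\tr f^{\mathsf{TT}}+\p^j\p^i f^{\mathsf{TT}}_{ij}=0$, because it is trace-free and divergence-free. So the interior part of $Pf^0$ is $-(n-1)\Delta\lambda\,1_\Omega$.

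For the boundary terms, pair $Pf^0$ with a test function $\varphi\in C_0^\infty(\R^n)$ and integrate by parts twice over $\Omega$. Set $h=\lambda{\bf e}+f^{\mathsf{TT}}$; then $\tr h=n\lambda$ and $\p^j h_{ij}=\p_i\lambda$. The term $-\Delta\tr h$ contributes, by Green's formula,
\[
-n\int_{\bo}(\lambda\,\p_\nu\varphi-\p_\nu\lambda\,\varphi)\,dS .
\]
The term $\p^i\p^j h_{ij}$ contributes
\[
\int_{\bo}\bigl(h_{\nu j}\p^j\varphi-(\p^jh_{\nu j})\varphi\bigr)\,dS .
\]
Next split $\p^j\varphi$ into its normal and tangential components on $\bo$: $h_{\nu j}\p^j\varphi=h_{\nu\nu}\p_\nu\varphi+h_{\nu T}\cdot\nabla_T\varphi$. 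Integrate the tangential piece by parts on the closed manifold $\bo$, which turns it into $-\div_{\bo}h_{\nu T}\,\varphi$. In the remaining term, the divergence identity gives $\p^jh_{\nu j}=\nu^i\p_i\lambda=\p_\nu\lambda$, up to the convention of extending $\nu$ off the boundary. Collecting terms, $Pf^0$ is the sum of $-(n-1)\Delta\lambda\,1_\Omega$, a double layer with density $h_{\nu\nu}-n\lambda$, and a single layer with density of the form $n\p_\nu\lambda-\p_\nu\lambda-\div_{\bo}h_{\nu T}$. Substituting $h_{\nu\nu}=\lambda+f^{\mathsf{TT}}_{\nu\nu}$ and $h_{\nu T}=f^{\mathsf{TT}}_{\nu T}$, the double-layer density becomes $f^{\mathsf{TT}}_{\nu\nu}-(n-1)\lambda$. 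The single-layer density becomes $(n-1)\p_\nu\lambda-\div_{\bo}f^{\mathsf{TT}}_{\nu T}$, up to sign.

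Finally, a distribution of the form $a1_\Omega+b\,\delta_{\bo}+c\,\p_\nu\delta_{\bo}$ vanishes if and only if $a=0$, $b=0$ and $c=0$. This can be seen by testing against $\varphi$ supported in $\Omega$ to kill $a$, then against $\varphi$ with prescribed values and normal derivatives on $\bo$. This yields exactly the three conditions of the theorem, and the converse direction is the same computation read backwards. The $H^2$ assumption, together with Proposition~\ref{pr_M} for $k=2$, gives enough traces to make all boundary terms meaningful.

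The main obstacle is the bookkeeping in the boundary computation. Extra terms can arise from the curvature of $\bo$ when converting $\p^jh_{\nu j}$ and decomposing gradients, and it must be confirmed that such terms cancel or combine exactly into $\p_\nu\lambda$ and $\div_{\bo}$. To control this, I would first do the computation in boundary normal coordinates (or for a half-space) to fix the signs, and then check that the curvature terms cancel. That cancellation should follow from $\p^j h_{ij}=\p_i\lambda$ holding in a full neighborhood inside $\Omega$.
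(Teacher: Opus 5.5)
Your proposal is correct and follows the paper's proof. The paper likewise reduces to $P$ via Lemma~\ref{lemma1}, writes $P$ of the zero extension as an interior term plus single- and double-layer terms on $\bo$ (Lemma~\ref{lemma_Pf}), drops the potential part, and reads the three conditions off the vanishing coefficients for $\lambda\mathbf{e}+f^{\mathsf{TT}}$. The curvature bookkeeping you flag as the main obstacle does not arise: $\nu$ enters only through the divergence theorem and is never differentiated, so the boundary term is exactly $\nu^i\p^j h_{ij}=\p_\nu\lambda$, and $\int_{\bo}\langle h_{\nu T},\nabla_{\bo}\varphi\rangle\,dS=-\int_{\bo}\div_{\bo}(h_{\nu T})\,\varphi\,dS$ holds exactly because $h_{\nu T}$ is tangent to the closed manifold $\bo$.
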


We need the following lemma for the proof of the theorem. In it,  we use the standard convention
\begin{equation}\label{DiracDefs}
\langle \delta_{\partial\Omega},\psi\rangle
=
\int_{\partial\Omega}\psi\,dS,
\qquad
\langle \partial_\nu\delta_{\partial\Omega},\psi\rangle
=
-\int_{\partial\Omega}\partial_\nu\psi\,dS
\end{equation}
for any test function \(\psi\in C^\infty_0(\mathbb R^n)\). Next, $(\div f)_\nu=(\div f)(\nu)$, and the notation $\left.h\right|_{\partial \Omega}\partial _\nu\delta_{\partial \Omega}$ denotes the distribution in $\mathcal{D}^\prime \left(\mathbb{R}^n \right)$ given by
$$\langle \left.h\right|_{\partial \Omega}\partial _\nu\delta_{\partial \Omega},\psi\rangle =-\int_{\partial \Omega}{\left.h\right|_{\partial \Omega}\partial _\nu\psi\ dS}=-\int_{\partial \Omega}{h\ \partial _\nu\psi\ dS}.$$
for any $\psi\in C_0^\infty \left(\mathbb{R}^n \right)$.

\begin{lemma} \label{lemma_Pf}
For $f\in H^2(\Omega)$, 
\begin{equation}\label{P_Of_fChi}
\begin{aligned}
P(f\chi )
={}&
(Pf)\chi  \\
&+
\left[
\partial_\nu\operatorname{tr}f
-
(\operatorname{div}f)_\nu
-
\operatorname{div}_{\partial\Omega}(f_{\nu T})
\right]\delta_{\partial\Omega} \\
&+
\left. \left(\operatorname{tr}{f}-f_{\nu\nu} \right)\right|_{\partial \Omega}
\partial_\nu\delta_{\partial\Omega}.
\end{aligned}
\end{equation}
\end{lemma}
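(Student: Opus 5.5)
We prove \eqref{P_Of_fChi} by testing both sides against an arbitrary $\psi\in C_0^\infty(\R^n)$ and moving derivatives back onto $f$ with Green's formula on $\Omega$. Here $\chi$ is the characteristic function of $\Omega$. Since $P$ is formally self-adjoint in the sense that
\[
\langle P(f\chi),\psi\rangle=\int_\Omega \left(-\tr f\,\Delta\psi+f_{ij}\,\p^i\p^j\psi\right)dx ,
\]
the task reduces to computing the two volume integrals. Each is rewritten as $\int_\Omega (Pf)\psi\,dx$ plus boundary terms. Because $f\in H^2(\Omega)$, its traces and the traces of its first derivatives on $\bo$ are well defined, so every integration by parts is justified; alternatively, one first proves the identity for $f\in C^\infty(\bar\Omega)$ and concludes by density.

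\textbf{Trace term.} Green's second identity gives
\[
-\int_\Omega \tr f\,\Delta\psi\,dx=-\int_\Omega \Delta(\tr f)\,\psi\,dx-\int_{\bo}\left(\tr f\,\p_\nu\psi-\p_\nu\tr f\,\psi\right)dS.
\]

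\textbf{Divergence term.} Integrating by parts twice,
\[
\int_\Omega f_{ij}\,\p^i\p^j\psi\,dx=\int_\Omega \p^j\p^i f_{ij}\,\psi\,dx+\int_{\bo} f_{ij}\nu^i\p^j\psi\,dS-\int_{\bo}(\div f)_\nu\,\psi\,dS .
\]

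The remaining step is the only one needing care: decompose the boundary term $\int_{\bo} f(\nu,\nabla\psi)\,dS$. Split $\nabla\psi=\p_\nu\psi\,\nu+\nabla_T\psi$ on $\bo$, which gives $f(\nu,\nabla\psi)=f_{\nu\nu}\p_\nu\psi+f_{\nu T}(\nabla_T\psi)$. Since $\bo$ is closed, the divergence theorem on $\bo$ yields
\[
\int_{\bo} f_{\nu T}(\nabla_T\psi)\,dS=-\int_{\bo}\div_{\bo}(f_{\nu T})\,\psi\,dS ,
\]
where $f_{\nu T}$ is viewed as a tangential one-form (vector field) on $\bo$. No curvature terms arise, because only the tangential component is integrated against a tangential gradient.

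Collecting terms, the coefficient of $\psi$ on $\bo$ is
\[
\p_\nu\tr f-(\div f)_\nu-\div_{\bo}(f_{\nu T}),
\]
and the coefficient of $\p_\nu\psi$ is $-(\tr f-f_{\nu\nu})$. Comparing with the conventions \eqref{DiracDefs}, these are exactly the $\delta_{\bo}$ and $h|_{\bo}\,\p_\nu\delta_{\bo}$ terms in \eqref{P_Of_fChi}, with $h=\tr f-f_{\nu\nu}$. The main obstacle is bookkeeping: keeping the sign conventions consistent (outer normal, and the minus sign in the definition of $\p_\nu\delta_{\bo}$) and making sure the tangential integration by parts on $\bo$ is done intrinsically.
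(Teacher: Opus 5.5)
Your proof is correct and follows the paper's argument essentially step for step: test against $\psi$, integrate by parts twice in each term, split $f(\nu,\nabla\psi)$ on $\bo$ into $f_{\nu\nu}\partial_\nu\psi$ plus a tangential part, apply the divergence theorem on the closed surface $\bo$, and pass from smooth fields to $H^2$ by density (or directly via traces). One small correction to the wording: the pairing identity you display is simply the definition of distributional derivatives, not a self-adjointness property of $P$, which maps tensors to scalars.
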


\begin{proof} 
Let $f\in C^2(\bar\Omega)$ first. Set
\[
\tau=\operatorname{tr} f,\qquad
Pf=-\Delta \tau+\p^i\p^j f_{ij}.
\]
For a test function \(\psi\),
\[
\langle P(f\chi),\psi\rangle
=
-\int_\Omega \tau\,\Delta\psi\,dx
+
\int_\Omega f_{ij}\p^i\p^j\psi\,dx .
\]
Integrating the first term by parts twice gives
\[
-\int_\Omega \tau\,\Delta\psi\,dx
=
-\int_\Omega (\Delta\tau)\psi\,dx
+
\int_{\partial\Omega}\partial_\nu\tau\,\psi\,dS
-
\int_{\partial\Omega}\tau\,\partial_\nu\psi\,dS .
\]
For the second term,
\[
\begin{aligned}
\int_\Omega f_{ij}\p^i\p^j\psi\,dx
&=
-\int_\Omega (\p^i f_{ij})\p^j\psi\,dx
+
\int_{\partial\Omega} f_{\nu j}\p^j\psi\,dS  \\
&=
\int_\Omega \p^j\p^i f_{ij}\,\psi\,dx
-
\int_{\partial\Omega}(\operatorname{div}f)_\nu\psi\,dS
+
\int_{\partial\Omega} f_{\nu j}\p^j\psi\,dS .
\end{aligned}
\]
On \(\partial\Omega\),
\[
f_{\nu j}\p^j\psi
=
f_{\nu\nu}\partial_\nu\psi
+
\langle f_{\nu T},\nabla_{\partial\Omega}\psi\rangle .
\]
Since \(\partial\Omega\) is closed,
\[
\int_{\partial\Omega}
\langle f_{\nu T},\nabla_{\partial\Omega}\psi\rangle\,dS
=
-\int_{\partial\Omega}
\operatorname{div}_{\partial\Omega}(f_{\nu T})\psi\,dS .
\]
Therefore
\[
\begin{aligned}
\langle P(f\chi),\psi\rangle
={}&
\int_\Omega
\left(-\Delta\tau+\p^j\p^i f_{ij}\right)\psi\,dx \\
&+
\int_{\partial\Omega}
\left[
\partial_\nu\tau
-
(\operatorname{div}f)_\nu
-
\operatorname{div}_{\partial\Omega}(f_{\nu T})
\right]\psi\,dS \\
&+
\int_{\partial\Omega}
(f_{\nu\nu}-\tau)|_{\bo}\,\partial_\nu\psi\,dS .
\end{aligned}
\]
For a general $f\in H^2(\Omega)$, approximate $f$ by a sequence $\{f_n\in C^2(\bar{\Omega})\}$ in $H^2$, apply \r{P_Of_fChi} to each $f_n$, and then take the limit of both sides in $\mathcal{D}^\prime(\mathbb{R}^n)$.
\end{proof}
 
\begin{proof}[Proof of Theorem~\ref{thm_M1}] 
We have $R(\chi  d^sv)=0$, where $\chi$ actually stands for extension from $\Omega$ to $\R^n$ as zero, and we used the fact that $\d^s$ commutes with it on $H_0^1(\Omega)$  (or see Corollary \ref{PotInKerR} below). Similarly, $P(\chi \d^s v)=0$. 
We need to analyze $R$ and $P$ applied to  $\chi f^\mathsf{TT}$ and $\chi \lambda\mathbf{e}$. For the latter, by Lemma~\ref{lemma_Pf}, we have
\[
P(\lambda\mathbf{e}\chi) = (n-1)[ -(\Delta\lambda)\chi + (\partial_\nu\lambda) \delta_{\bo}+ \lambda |_{\p\Omega} \partial_\nu\delta_{\bo}].
\]
For the TT part, we have
\[
P(f^\mathsf{TT}\chi) = -\operatorname{div}_{\partial\Omega}(f_{\nu T}^\mathsf{TT})
\delta_{\partial\Omega} - f_{\nu\nu}^\mathsf{TT}|_{\p\Omega} \p_\nu \delta_{\partial\Omega}.
\]
Combining those two with Lemma~\ref{lemma1}, we complete the proof. 
\end{proof}

\begin{remark}
It is not a priori clear how rich the space of $(\lambda,f^\mathsf{TT})$ satisfying the conclusions of Theorem~\ref{thm_M1} is. When $\lambda=0$, such non-trivial TT tensors exist, as mentioned above. We do not know if $\lambda$ can be non-zero. 
\end{remark}

If $f$ is TT a priori, it is in $\Ker R$ if and only if 
\begin{equation}   \label{cond2}
  f_{\nu\nu}=0, \quad \div_{\bo}  f_{\nu T} =0. 
 \end{equation} 
 In dimension \(3\), non-zero compactly supported TT tensor fields exist, see \cite[Appendix~B]{Corvino}, \cite[Proposition~3.10]{Gicquaud} for explicit constructions. They  can be constructed by
applying the linearized Cotton--York operator to compactly supported
trace-free symmetric \(2\)-tensors;  see Beig~\cite{Beig_TT}. 
Moreover, on compact manifolds without boundary, in suitable weighted Sobolev topologies, compactly supported TT tensors are dense in the corresponding TT space; see Remark 9.9 in Delay \cite{Delay} and Dahl--Kröncke \cite{DahlKr}

The theorem shows that the whole space equivalent to Proposition~\ref{pr_R^n} does not hold since we need extra conditions on $f^\mathsf{TT}$ to guarantee that it is in $\Ker R$. If those conditions are not met,   $Rf=0$ need not force the conformal component in the bounded-domain decomposition to vanish.

We can use the standard potential-solenoidal decomposition instead. 

\begin{theorem} \label{thm_M2} The symmetric two-tensor field $f\in H^2(\Omega)$ belongs to $\Ker R$ if and only if its standard potential-solenoidal decomposition $f=\d^s v+f^s$ in $\Omega$   satisfies the following: 
\[
\Delta\tr f^s=0 \quad \text{in $\Omega$},
\]
\[   f^s_{\nu\nu} = \tr f^s, \quad \div_{\bo} f^s_{\nu T} = \partial_\nu\tr f^s \quad \text{on $\bo$.}
\]
\end{theorem}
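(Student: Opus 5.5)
Following the proof of Theorem~\ref{thm_M1}, we use Lemma~\ref{lemma1} to replace the condition $Rf=0$ (for $f$ extended by zero, i.e.\ $\chi f$) by $P(\chi f)=0$ in $\mathcal{D}'(\R^n)$. We write $f=\d^s v+f^s$ with $v|_{\bo}=0$ and $\div f^s=0$. This is the standard decomposition on $\Omega$ \cite{Sh-book}, with $v\in H^3(\Omega)$ and $f^s\in H^2(\Omega)$ when $f\in H^2(\Omega)$. Since $v$ vanishes on $\bo$, $\d^s$ commutes with the extension by zero, so $\chi\,\d^s v=\d^s(\chi v)$. It then suffices to check $P\d^s w=0$ for every compactly supported distribution $w$. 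This holds because
\[
-\Delta\,\p^i w_i+\p^i\p^j\tfrac12(\p_i w_j+\p_j w_i)=-\Delta\,\p^i w_i+\Delta\,\p^j w_j=0 .
\]
(Alternatively we invoke the corresponding step of the previous proof.) Hence $P(\chi f)=P(\chi f^s)$, and $f\in\Ker R$ if and only if $P(\chi f^s)=0$.

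We then apply Lemma~\ref{lemma_Pf} to $f^s\in H^2(\Omega)$. Since $\div f^s=0$, we have $(\div f^s)_\nu=0$ and $\p^j\p^i f^s_{ij}=0$, so $Pf^s=-\Delta\tr f^s$. The lemma gives
\[
P(\chi f^s)=-(\Delta \tr f^s)\chi+\bigl[\p_\nu\tr f^s-\div_{\bo}(f^s_{\nu T})\bigr]\delta_{\bo}+\bigl(\tr f^s-f^s_{\nu\nu}\bigr)\big|_{\bo}\,\p_\nu\delta_{\bo}.
\]
Sufficiency is then immediate: under the three stated conditions every term vanishes, so $P(\chi f)=0$ and Lemma~\ref{lemma1} gives $Rf=0$.

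For necessity, the main (mild) obstacle is showing that the vanishing of the sum forces each of the three pieces to vanish separately. Restricting to test functions supported in $\Omega$ kills both boundary terms, so $\Delta\tr f^s=0$ in $\Omega$; this also removes the $L^2$ term. The remaining distribution $a\,\delta_{\bo}+b\,\p_\nu\delta_{\bo}$, with $a\in H^{1/2}(\bo)$ and $b\in H^{3/2}(\bo)$ (traces of $H^1$ and $H^2$ functions), must vanish. We test it against $\psi$ with $\psi|_{\bo}=0$ and $\p_\nu\psi|_{\bo}=\phi$ arbitrary, for example $\psi=\rho\,\tilde\phi$, where $\rho$ is a signed distance cut off near $\bo$ and $\tilde\phi$ is an extension of $\phi$ constant in the normal direction. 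This gives $\int_{\bo}b\phi\,dS=0$ for all smooth $\phi$, so $b=0$. Then testing with arbitrary $\psi|_{\bo}$ gives $a=0$. These two identities are exactly $f^s_{\nu\nu}=\tr f^s$ and $\div_{\bo}f^s_{\nu T}=\p_\nu\tr f^s$ on $\bo$, which completes the equivalence.
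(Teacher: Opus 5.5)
Your proposal is correct and follows the paper's route. You reduce $Rf=0$ to $P(\chi f^s)=0$ via Lemma~\ref{lemma1} and the vanishing of $P$ on the potential part, then read off the three conditions from Lemma~\ref{lemma_Pf} using $\div f^s=0$; you also spell out the separation of the interior, $\delta_{\bo}$ and $\p_\nu\delta_{\bo}$ terms, which the paper leaves implicit.
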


The proof follows directly from Lemma~\ref{lemma1} and Lemma~\ref{lemma_Pf}. The theorem does not require $\tr f^s=0$ (and does not separate it from a conformal term) but it requires the trace to be harmonic. It is not clear whether solutions for $f^s$ with non-zero traces exist. If $\tr f^s=0$, then such tensors reduce to the ones in Theorem~\ref{thm_M1}.

\appendix 
\section{Linearization of the area functional}\label{appendix}
We recall the linearization of the minimal surface functional, derived in  \cite{busch2025_minimal}, as well, and show that potential fields belong to its kernel. 
Let $(M,g)$ be a compact Riemannian manifold with boundary of dimension $n \geq 2$. For $1\le k<n$, given a $(k-1)$-dimensional closed submanifold $\gamma$ of $\p M$, under some conditions, see, e.g.,  \cite{busch2025_minimal} for $k=n-1$, we can find a $k$-dimensional minimal surface $\Sigma$ (w.r.t. the metric $g$) in $M$ such that $\p\Sigma=\gamma$. 
We denote the area of the minimal surface $\Sigma$ by $A(\Sigma)$. We consider the linearization of the area $A(\Sigma)=A_g(\Sigma)$ w.r.t. the metric $g$.

When $k=1$, $\Sigma$ is a geodesic on Riemannian manifold $M$, and $A(\Sigma)$ is its length. The problem then is reduced to the usual boundary rigidity problem of Riemannian manifolds, which has been extensively studied, see the recent survey \cite{SUVZ-survey} and the references therein for more details.

\begin{proposition}
Let $f$ be an arbitrary symmetric 2-tensor, and define $g^s=g+sf$ for $s\in (-\epsilon, \epsilon)$ with $0< \epsilon\ll 1$. Let $\gamma$ be a $(k-1)$-dimensional closed submanifold in $\p M$, for each $s$, we denote the unique $k$-dimensional minimal surface w.r.t. $g^s$, whose boundary is $\gamma$, by $\Sigma_s$. Let $A_{g^s}(\Sigma_s)$ be the area of $\Sigma_s$ w.r.t. $g^s$. Define the embedding $\psi: \Sigma_0\to M$. 
Then 
\begin{equation}\label{tensor Radon}
\frac{\d}{\d s}\Big|_{s=0} A_{g^s}(\Sigma_s)=\frac{1}{2}\int_{\Sigma_0} \tr_{\psi^*(g)} \Big(\psi^*(f)\Big)\, \d\mu,
\end{equation}
where $\d\mu$ is the volume form of $\Sigma_0$.
\end{proposition}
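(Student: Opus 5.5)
The plan is a first-variation computation. I split the derivative of $s\mapsto A_{g^s}(\Sigma_s)$ into two contributions: one from varying the metric with the surface fixed, and one from moving the surface. The second contribution vanishes because $\Sigma_0$ is minimal and the boundary $\gamma$ is held fixed.

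First, I would write $\Sigma_s=\psi_s(\Sigma_0)$ for a smooth family of embeddings $\psi_s:\Sigma_0\to M$ with $\psi_0=\psi$ and $\psi_s|_{\p\Sigma_0}=\psi|_{\p\Sigma_0}$. Smooth dependence on $s$ is available under the standing assumption that $\Sigma_s$ is unique; I would justify it by the implicit function theorem applied to the minimal surface equation. This step needs $\Sigma_0$ to be nondegenerate, i.e. the Jacobi operator has trivial kernel with Dirichlet conditions, and I take that to be part of the "under some conditions" hypothesis. Then
\[
A_{g^s}(\Sigma_s)=\int_{\Sigma_0}\sqrt{\det\big(\psi_s^*(g+sf)\big)}\,dy
\]
in local coordinates $y$ on $\Sigma_0$. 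Setting $F(s,t)=\int_{\Sigma_0}\sqrt{\det(\psi_t^*(g+sf))}\,dy$, the chain rule gives
\[
\frac{d}{ds}\Big|_{s=0} A_{g^s}(\Sigma_s)=\partial_sF(0,0)+\partial_tF(0,0).
\]

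Second, I compute $\partial_sF(0,0)$. The general identity $\frac{d}{ds}\sqrt{\det h_s}=\tfrac12\sqrt{\det h_s}\,\tr_{h_s}(\dot h_s)$, applied with $h_s=\psi^*g+s\,\psi^*f$, gives
\[
\partial_sF(0,0)=\tfrac12\int_{\Sigma_0}\tr_{\psi^*g}(\psi^*f)\,d\mu,
\]
which is exactly the right-hand side of \eqref{tensor Radon}.

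Third, $\partial_tF(0,0)$ is the first variation of $g$-area of $\Sigma_0$ along the variation field $X=\partial_t\psi_t|_{t=0}$, which vanishes on $\p\Sigma_0$. The first variation formula gives
\[
\partial_tF(0,0)=-\int_{\Sigma_0}\langle H,X\rangle\,d\mu+\int_{\p\Sigma_0}\langle X,\eta\rangle,
\]
where $H$ is the mean curvature vector and $\eta$ is the outward conormal. The interior term vanishes because $\Sigma_0$ is minimal, and the boundary term vanishes because $X=0$ on $\p\Sigma_0$. Summing the two contributions proves \eqref{tensor Radon}.

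The main obstacle is the regularity and differentiability in $s$ of the family $\Sigma_s$, which the paper essentially assumes. I would handle it by the implicit function theorem in normal-graph coordinates over $\Sigma_0$, using invertibility of the Jacobi operator with Dirichlet boundary conditions. The rest of the argument is the routine computation above.
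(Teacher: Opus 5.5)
Your proposal is correct and follows essentially the same route as the paper: write $\Sigma_s=\psi_s(\Sigma_0)$ with $\psi_s|_\gamma=\id$, and split the $s$-derivative of $\det\big(\psi_s^*(g^s)\big)$ into a moving-surface term, which vanishes by the first variation formula and minimality of $\Sigma_0$, and a metric-variation term, which Jacobi's formula evaluates as $\tfrac12\int_{\Sigma_0}\tr_{\psi^*g}(\psi^*f)\,d\mu$. Your point that smooth dependence of $\Sigma_s$ on $s$ comes from the implicit function theorem, under nondegeneracy of the Jacobi operator with Dirichlet conditions, makes explicit an assumption the paper uses without stating it.
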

Notice that $\psi^*(g)=\left(\psi^*(g)_{\alpha\beta}\right)$ is the induced metric on $\Sigma_0$, whose dual metric is $\psi^*(g)^{-1}=\left\{\psi^*(g)^{\alpha\beta}\right\}$. In local coordinates $\tr_{\psi^*(g)}  \psi^*(f) := \psi^*(g)^{\alpha\beta} \psi^*(f)_{\alpha\beta}$ is the trace of the pull-backed tensor $\psi^*(f)$ on the minimal surface $\Sigma_0$. Given an orthonormal basis $\{e_1,\cdots, e_k\}$ for $\Sigma_0$, the trace is simply $\sum_{\alpha=1}^k f(e_\alpha, e_\alpha)$.
\begin{proof}
For $s\in (-\epsilon,\epsilon)$, we consider proper embeddings $\psi_s: \Sigma_0\to M$ with $\psi_s(\Sigma_0)=\Sigma_s$ and $\psi_s|_{\gamma}=\id$,  so $\psi_0=\psi$. 
%We simply denote $f_0$ by $f$.  
Let $\{x^1,\cdots, x^n\}$ be local coordinates on $M$ and $\{y^1,\cdots, y^k\}$ be local coordinates for $\Sigma_0$. So $\psi_s(y^1,\cdots, y^k)=(\psi_s^1,\cdots, \psi_s^n)$ and 
$$A_{g^s}(\Sigma_s)=\int_{\Sigma_0} \sqrt{ \det \psi_s^* (g^s)}\, \d y=\int_{\Sigma_0} \sqrt{ \det \Big(g^s_{ij}(\psi_s(y))\frac{\p \psi_s^i}{\p y^\alpha}\frac{\p \psi_s^j}{\p y^\beta}\Big)}\, \d y.$$
Then
$$\frac{\d}{\d s}\Big|_{s=0} A_{g^s}(\Sigma_s)=\int_{\Sigma_0} \frac{1}{2 \sqrt{\det (\psi^*(g))}} \frac{\d}{\d s}\Big|_{s=0} \det \Big(g^s_{ij}(\psi_s(y))\frac{\p \psi_s^i}{\p y^\alpha}\frac{\p \psi_s^j}{\p y^\beta}\Big)\, \d y.$$
Notice that $g^s=g+sf$, it is not difficult to check that 
\begin{equation*}
\begin{split}
\frac{\d}{\d s}\Big|_{s=0} \det \Big(g^s_{ij}(\psi_s(y))\frac{\p \psi_s^i}{\p y^\alpha}\frac{\p \psi_s^j}{\p y^\beta}\Big)=\frac{\d}{\d s}& \Big|_{s=0}  \det  \Big(g_{ij}(\psi_s(y))\frac{\p \psi_s^i}{\p y^\alpha}\frac{\p \psi_s^j}{\p y^\beta}\Big)\\
&+\frac{\d}{\d s}\Big|_{s=0} \det \Big(g^s_{ij}(\psi(y))\frac{\p \psi^i}{\p y^\alpha}\frac{\p \psi^j}{\p y^\beta}\Big).
\end{split}
\end{equation*} 
By the first variational formula for the area and the fact that $\Sigma_0$ is a minimal surface w.r.t. $g$, we get that 
\begin{align*}
\int_{\Sigma_0} \frac{1}{2 \sqrt{\det (\psi^*(g))}}\frac{\d}{\d s}\Big|_{s=0} &\det\Big(g_{ij}(\psi_s(y))\frac{\p \psi_s^i}{\p y^\alpha}\frac{\p \psi_s^j}{\p y^\beta}\Big)\, \d y\\
&=\int_{\Sigma_0} \frac{\d}{\d s}\Big|_{s=0}\sqrt{ \det\Big(g_{ij}(\psi_s(y))\frac{\p \psi_s^i}{\p y^\alpha}\frac{\p \psi_s^j}{\p y^\beta}\Big)}\,\d y=0.
\end{align*}
Now by Jacobi's formula
%\begin{equation*}
\begin{align}
\frac{\d}{\d s}\Big|_{s=0} A_{g^s}(\Sigma_s)&=\int_{\Sigma_0} \frac{1}{2 \sqrt{\det (\psi^*(g))}} \frac{\d}{\d s}\Big|_{s=0} \det \Big(g^s_{ij}(\psi(y))\frac{\p \psi^i}{\p y^\alpha}\frac{\p \psi^j}{\p y^\beta}\Big)\, \d y\\
&=\int_{\Sigma_0} \frac{1}{2}\sqrt{\det (\psi^*(g))}\, \tr \Big\{(\psi^*(g))^{-1} \frac{\d}{\d s}\Big|_{s=0} \Big(g^s_{ij}(\psi(y))\frac{\p \psi^i}{\p y^\alpha}\frac{\p \psi^j}{\p y^\beta}\Big)\Big\}\, \d y\\
&=\frac{1}{2}\int_{\Sigma_0} \tr \Big\{(\psi^*(g))^{-1} \Big(f_{ij}(\psi(y))\frac{\p \psi^i}{\p y^\alpha}\frac{\p \psi^j}{\p y^\beta}\Big)\Big\}\, \d\mu\\
&= \frac{1}{2}\int_{\Sigma_0} \tr_{\psi^*(g)} \Big(\psi^*(f)\Big)\, \d\mu.
\end{align}
\end{proof}

We consider the r.h.s.\ of \eqref{tensor Radon} as a generalized Radon transform of symmetric 2-tensors. Given a $k$-dimensional minimal surface $\Sigma$ of $(M,g)$, let $\iota:\Sigma\to M$ be the classical inclusion. For the sake of simplicity, we simplify the notation $\tr_{\iota^*(g)}\Big(\iota^*(f)\Big)$ as $\tr_{\Sigma}(f)$. We define

\begin{definition}\label{def1} 
\[
Rf(\Sigma):=\int_{\Sigma} \tr_{\iota^*(g)}\Big(\iota^*(f)\Big)\, \d\mu=\int_{\Sigma} \tr_{\Sigma}(f)\, \d\mu,
\]
with $\d\mu$ the volume form on $\Sigma$. 
\end{definition}
It's easy to see that in the conformal case, i.e. $g^s=(1+sc^2)g$, the above linearization gives exactly the usual Radon type transform of a scalar function over minimal surfaces of $M$.

Recall the definition of the potential symmetric 2-tensor fields $\d^sv$, where $\d^s=\sigma \nabla$ is the symmetric differentiation with the Levi-Civita connection $\nabla$, for some 1-form $v$ with $v|_{\p M}=0$.

%=============
\begin{proposition}\label{prop_potential_minimal}
Let \(\Sigma \subset (M,g)\) be an embedded \(k\)-dimensional submanifold,
with inclusion \(\iota:\Sigma\to M\). Let \(v\) be a one-form on \(M\). 
%and let \(h=d^s v\). 
Then
\[
\tr_\Sigma(d^s v)
=
\div_\Sigma(\iota^*v)-v(\vec H),
\]
where \(\operatorname{div}_\Sigma\) denotes the divergence on one-forms
with respect to the induced metric, i.e.
\[
\operatorname{div}_\Sigma \alpha
=
(\iota^*g)^{ab}\nabla^\Sigma_a \alpha_b, 
\] 
and \(\vec H\) is the mean curvature
vector of \(\Sigma\) in \(M\), defined by
\[
\vec H=\sum_{\alpha=1}^k \big(\nabla_{e_\alpha}e_\alpha\big)^\perp
\]
for any local \(\iota^*g\)-orthonormal frame
\(e_1,\dots,e_k\) of \(T\Sigma\). Here $\perp$ denotes the normal part of a vector w.r.t. the submanifold $\Sigma$. In particular, if \(\Sigma\) is minimal,
then
\[
\tr_{\Sigma}(d^s v)
=
\div_{\Sigma}(\iota^*v).
\]
\end{proposition}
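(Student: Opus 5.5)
Work pointwise in a local $\iota^*g$-orthonormal frame $e_1,\dots,e_k$ of $T\Sigma$ near a given point of $\Sigma$. By definition of the trace on $\Sigma$,
\[
\tr_\Sigma(d^s v)=\sum_{\alpha=1}^k (d^s v)(e_\alpha,e_\alpha)=\sum_{\alpha=1}^k (\nabla_{e_\alpha} v)(e_\alpha),
\]
since the symmetrization is invisible on the diagonal. So the task reduces to comparing $\sum_\alpha(\nabla_{e_\alpha}v)(e_\alpha)$ with the intrinsic divergence of $\alpha:=\iota^*v$ on $\Sigma$.

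Expand each term by the Leibniz rule: $(\nabla_{e_\alpha}v)(e_\alpha)=e_\alpha\big(v(e_\alpha)\big)-v(\nabla_{e_\alpha}e_\alpha)$. Next apply the Gauss formula $\nabla_{e_\alpha}e_\alpha=\nabla^\Sigma_{e_\alpha}e_\alpha+(\nabla_{e_\alpha}e_\alpha)^\perp$, where $\nabla^\Sigma$ is the Levi-Civita connection of $\iota^*g$, i.e. the tangential projection of $\nabla$. This splits the sum as
\[
\sum_\alpha\Big[e_\alpha\big(\alpha(e_\alpha)\big)-\alpha(\nabla^\Sigma_{e_\alpha}e_\alpha)\Big]-v\Big(\sum_\alpha(\nabla_{e_\alpha}e_\alpha)^\perp\Big).
\]
In the first bracket we use that $v(e_\alpha)=\alpha(e_\alpha)$ and $v(X)=\alpha(X)$ for tangent $X$.

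The first bracket is exactly $\sum_\alpha(\nabla^\Sigma_{e_\alpha}\alpha)(e_\alpha)=(\iota^*g)^{ab}\nabla^\Sigma_a\alpha_b=\div_\Sigma(\iota^*v)$. The second term is $v(\vec H)$ by the definition of the mean curvature vector. This gives the identity, and $\vec H=0$ on minimal $\Sigma$ yields the special case.

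The only point needing care is independence of the frame. Both sides are frame-independent tensorial expressions: traces and $\vec H$, which can be written as the trace of the second fundamental form. Hence computing in one local orthonormal frame suffices, and no step presents a real obstacle.
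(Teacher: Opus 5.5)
Your proof is correct and is essentially the paper's argument: a computation in a local orthonormal frame that reduces $\tr_\Sigma(d^s v)$ to $\sum_\alpha(\nabla_{e_\alpha}v)(e_\alpha)$ and then separates the tangential and normal contributions. The only difference is cosmetic. The paper splits the vector field $V=v^\sharp$ as $V^T+V^\perp$ and differentiates $g(V^\perp,e_\alpha)=0$, whereas you apply the Leibniz rule to $v$ and split $\nabla_{e_\alpha}e_\alpha$ by the Gauss formula. Both use the same two ingredients (metric compatibility, and the fact that the tangential part of $\nabla$ is $\nabla^\Sigma$), just in a slightly different order.
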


\begin{proof}
Let \(e_1,\dots,e_k\) be a local \(\iota^*g\)-orthonormal frame of
\(T\Sigma\). By definition,
\[
(d^s v)(X,Y)
=
\frac12\big((\nabla_Xv)(Y)+(\nabla_Yv)(X)\big).
\]
Therefore
\[
\tr_{\Sigma}(d^s v)
=
\sum_{\alpha=1}^k (d^s v)(e_\alpha,e_\alpha)
=
\sum_{\alpha=1}^k (\nabla_{e_\alpha}v)(e_\alpha).
\]
Let \(V=v^\sharp\) be the vector field on \(M\) dual to \(v\). Then
\[
(\nabla_{e_\alpha}v)(e_\alpha)
=
g(\nabla_{e_\alpha}V,e_\alpha).
\]
Decompose \(V\) along \(\Sigma\) into tangential and normal parts:
\[
V=V^T+V^\perp.
\]
Since \(V^T=(\iota^*v)^\sharp\), we have
\[
\sum_{\alpha=1}^k g(\nabla_{e_\alpha}V^T,e_\alpha)
=
\div_{\Sigma}\big((\iota^*v)^\sharp\big).
\]
For the normal part, using \(g(V^\perp,e_\alpha)=0\), we get
\[
0
=
e_\alpha g(V^\perp,e_\alpha)
=
g(\nabla_{e_\alpha}V^\perp,e_\alpha)
+
g(V^\perp,\nabla_{e_\alpha}e_\alpha).
\]
Hence
\[
g(\nabla_{e_\alpha}V^\perp,e_\alpha)
=
-
g\big(V^\perp,(\nabla_{e_\alpha}e_\alpha)^\perp\big).
\]
Summing over \(\alpha\), we obtain
\[
\sum_{\alpha=1}^k g(\nabla_{e_\alpha}V^\perp,e_\alpha)
=
-
g(V^\perp,\vec H).
\]
Since \(\vec H\) is normal, \(g(V^\perp,\vec H)=g(V,\vec H)=v(\vec H)\).
Therefore
\[
\tr_{\Sigma}(d^s v)
=
\div_{\Sigma}\big((\iota^*v)^\sharp\big)
-
v(\vec H).
\]
If \(\Sigma\) is minimal, then \(\vec H=0\), and the claimed identity follows.
\end{proof}
%===========

\begin{corollary}\label{PotInKerR}
Potential fields belong to $\Ker R$. 
\end{corollary}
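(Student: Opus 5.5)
The plan is to apply Proposition~\ref{prop_potential_minimal} and then the divergence theorem on each minimal surface. Let $f=\d^s v$ with $v$ a one-form on $M$ satisfying $v|_{\p M}=0$, and let $\Sigma$ be one of the minimal surfaces in the family, so that $\p\Sigma=\gamma\subset\p M$. Since $\Sigma$ is minimal, $\vec H=0$, and Proposition~\ref{prop_potential_minimal} gives pointwise on $\Sigma$
\[
\tr_\Sigma(\d^s v)=\div_\Sigma(\iota^*v).
\]
Integrating over $\Sigma$ with respect to $\d\mu$ yields
\[
R(\d^s v)(\Sigma)=\int_\Sigma \div_\Sigma(\iota^*v)\,\d\mu .
\]

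Next, apply the divergence theorem on the compact manifold with boundary $(\Sigma,\iota^*g)$:
\[
\int_\Sigma \div_\Sigma(\iota^*v)\,\d\mu=\int_{\p\Sigma} (\iota^*v)(N)\,\d\sigma,
\]
where $N$ is the outward unit conormal of $\p\Sigma$ in $\Sigma$. Because $\p\Sigma\subset\p M$ and $v$ vanishes on $\p M$, the integrand is zero, so $R(\d^s v)(\Sigma)=0$ for every such $\Sigma$. Hence $\d^s v\in\Ker R$. In the Euclidean setting of the paper this is the statement used in the proof of Theorem~\ref{thm_M1}: the surfaces are hyperplane pieces $\Sigma\cap\bar\Omega$ with boundary on $\bo$, and the zero extension of $\d^s v$ equals $\d^s$ of the zero extension of $v$.

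The main obstacle is regularity. The argument needs $\Sigma$ to be smooth up to its boundary and $v$ to have a trace on $\Sigma$. I would first prove the result for smooth $v$ vanishing on $\p M$, where the computation above is exact. For $v\in H^1_0$, I would approximate by $v_k\in C_0^\infty(M^{\rm int})$. For these approximants $\iota^*v_k$ is compactly supported in the interior of $\Sigma$, so the boundary term vanishes trivially. Then $R(\d^s v_k)\to R(\d^s v)$ in the distributional sense in the surface parameters by continuity of $R$ on $L^2$, which gives the general case.

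In the Euclidean case one can instead cite the Fourier slice identity \eqref{tensor Fourier slice 2} as a cross-check. This works because the zero extension of $v$ remains in $H^1$.
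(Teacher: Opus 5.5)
Your proposal is correct and matches the paper's proof. Like the paper, you use Proposition~\ref{prop_potential_minimal} with $\vec H=0$ to get $\tr_\Sigma(\d^s v)=\div_\Sigma(\iota^*v)$, then apply the divergence theorem on $\Sigma$, where the boundary term over $\gamma=\p\Sigma\subset\p M$ vanishes because $v|_{\p M}=0$. Your approximation by $v_k\in C_0^\infty$ is a reasonable addition that the paper leaves implicit, and it also avoids any regularity issues with $\p\Sigma$.
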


\begin{proof}
 Since $\Sigma$ is a minimal surface, we have $\vec{H}=0$. By the divergence theorem, 
\begin{align}\label{kernel of R}
R(\d^sv)(\Sigma)=\int_{\Sigma} \tr_{\Sigma}(\d^s v)\, \d\mu=\int_{\Sigma} \div_{\Sigma} (\iota^*(v))\, \d\mu=\int_{\gamma} v(\nu)\, d\sigma=0,
\end{align}
where $\nu$ is the unit outer normal vector along $\p \Sigma=\gamma\subset \p M$ of the minimal surface $\Sigma$ and $d\sigma$ is the volume form on $\gamma$. 
\end{proof}

\bibliographystyle{abbrv}
% %
%\bibliography{../myreferences.bib}
%\bibliography{myreferences.bib}

\end{document}